\documentclass[10pt]{amsart}
\usepackage[latin1]{inputenc}
\usepackage{epsfig}
\usepackage{mathrsfs}
\usepackage{color}
\usepackage{amsthm}
\usepackage{amsmath}
\usepackage{amsfonts}
\usepackage{amssymb}
\usepackage{graphicx}
%%%%%%%%%%%%%%%%%%%%%%%%%%%%%%%%%%%%%%%%%%%%%%%%%%%%%%%%%%%%%%%%%%%%%%%%%%%%%%%%%%%%%%%%%%%%%%
\makeatletter
\@addtoreset{equation}{section} %reinitialise le compteur equation quand section est incremente
\makeatother
%%%%%%%%%%%%%%%%%%%%%%%%%%%%%%%%%%%%%%%%%%%%%%%%%%%%%%%%%%%%%%%%%%%%%%%%%%%%%%%%%%
\setlength{\topmargin}{-0.0in}
\setlength{\oddsidemargin}{-0.0in}
\setlength{\textheight}{8.75in}
\setlength{\textwidth}{5.75in}

\parskip 1.2ex plus 0.5ex minus 0.5ex

\newtheorem{prop}{Proposition}[section]
\newtheorem{defi}{Definition}[section]

\newtheorem{thm}{Theorem}[section]
\newtheorem{rem}{Remark}[section]
\newtheorem{exam}{Example}[section]
\newtheorem{exams}{Examples}[section]
%%% The above 7 commands are used in the following way:
%%% The definition environment, for example, is created by
%%% \begin{definition}\label{xxx}. . .\end{definition}
\newfont{\sBlackboard}{msbm10 scaled 900}

\newcommand{\ud}     {{\rm d}}
\newcommand{\mylabel}[1]{\label{#1}
            \ifx\undefined\stillediting
            \else \fbox{$#1$}\fi }

\newcommand{\EEQ}{\end{equation}}
\newcommand{\rfb}[1]{\mbox{\rm
   (\ref{#1})}\ifx\undefined\stillediting\else:\fbox{$#1$}\fi}
\newcommand{\half}   {{\frac{1}{2}}}

\newfont{\Blackboard}{msbm10 scaled 1200}

\newfont{\roma}{cmr10 scaled 1200}

\def\CC{\rm \hbox{C\kern-.56em\raise.4ex
         \hbox{$\scriptscriptstyle |$}\kern+0.5 em }}
\newcommand{\bt}{\begin{Theorem}}
\newcommand{\et}{\end{Theorem}}
\newcommand{\br}{\begin{remark}}
\newcommand{\er}{\end{remark}}
\newcommand{\bc}{\begin{Corollary}}
\newcommand{\ec}{\end{Corollary}}
\newcommand{\el}{\end{Lemma}}
\newcommand{\bd}{\begin{definition}}
\newcommand{\ed}{\end{definition}}

\newcommand{\R}  {\mathbb{R}}

%1

%2
%

%

%
\newcommand{\mm}    {{\hbox{\hskip 0.5pt}}}
\newcommand{\m}     {{\hbox{\hskip 1pt}}}

\newcommand{\bluff} {{\hbox{\raise 15pt \hbox{\mm}}}}

\newcommand{\rarrow} {{\,\rightarrow\,}}

%

%
% The following commands create big symbols for figures.

%\newcommand{\bc}   {{\hbox{\fourteeni c}}}
%\newcommand{\bd}   {{\hbox{\fourteeni d}}}

%
% The definition of double prime:

%
%\renewcommand{\theequation}{{\arabic{section}.\arabic{equation}}}

%%% The above is used to put a period after section numbers.

%%% The above is used to put a period after proposition numbers etc.
%%% At the beginning of each section: \setcounter{equation}{0}
%
% Following are commands needed to make smaller titles for sections
%
\makeatletter
\def\section{\@startsection {section}{1}{\z@}{-3.5ex plus -1ex minus
    -.2ex}{2.3ex plus .2ex}{\large\bf}}
\makeatother
%
% Commands to which GA is used:
%\def\be{\begin{equation}}
%\def\ee{\end{equation}}

\def\ds{\displaystyle}
\newcommand{\re}{\mathrm{Re}}

\newcommand{\e}{\mathrm{e}}
\begin{document}

\thispagestyle{empty}
\title[Stabilization of fractional-evolution systems]{Stabilization of fractional-evolution systems}
\author{Ka\"{\i}s AMMARI}
\address{UR Analysis and Control of PDEs, UR 13ES64, Department of Mathematics, Faculty of Sciences of Monastir, University of Monastir, 5019 Monastir, Tunisia and LMV/UVSQ/Paris-Saclay, France} \email{kais.ammari@fsm.rnu.tn}

 \author{Fathi Hassine}
\address{UR Analysis and Control of PDEs, UR 13ES64, Department of Mathematics, Faculty of Sciences of Monastir, University of Monastir, 5019 Monastir, Tunisia} \email{fathi.hassine@fsm.rnu.tn}

\author{Luc ROBBIANO}
\address{Laboratoire de Math\'ematiques, Universit\'e de Versailles Saint-Quentin en Yvelines, 78035 Versailles, France}
\email{luc.robbiano@uvsq.fr}

\date \today

\begin{abstract}
This paper is devoted to the analysis of the problem of stabilization of fractional (in time) partial differential equations. We consider the following equation
\begin{equation*}
\ds\partial^{\alpha,\eta}_{t} u(t)=\mathcal{A}u(t)-\frac{\eta}{\Gamma (1-\alpha)}\int_{0}^{t}(t-s)^{-\alpha}\e^{-\eta(t-s)}u(s)\,\ud s,\; t > 0, 
\end{equation*}
with the initial data $u(0)=u^{0}$, where $\mathcal{A}$ is a unbounded operator in Hilbert space and $\partial_{t}^{\alpha,\eta}$ stands for the fractional derivative. We provide two main results concerning the behavior of the solutions when $t\longrightarrow+\infty$. We look first to the case $\eta>0$ where we prove that the solution of this problem is exponential stable then we consider  the case $\eta=0$ when we prove under some consideration on the resolvent that the energy of the solution goes to $0$ as $t$ goes to the infinity as $1/t^\alpha$.
\end{abstract}

\subjclass[2010]{35A01, 35A02, 35M33, 93D20}
\keywords{stabilization, abstract-wave equation, fractional-evolution}

\maketitle

\tableofcontents
%%%%%%%%%%%%%%%%%%%%%%%%%%%%%%%%%%%%%%%%%%%%%%%%%%%%%%%%%%%%%%%%%%%%%%%%%%%%%%%%%%%%%%%%%%%%%%%%%%%%%%%%%%%%%%%%%%%%%%%%%%%%%%%%%%%%%%%%%%%%%%%%%%%%%%%%%%%%%%%%%%%%%%%%%%%%SECTION%%%%%%%%%%%%%%%%%%%%%%%%%%%%%%%%%%%%%%%%%%%%%%%%%%%%%%%%%%%%%%%%%%%%%%%%%%%%%%%%%%%%%%%%%%%%%%%%%%%%%%%%%%%%%%%%%%%%%%%%%%%%%%%%%%%%%%%%%%%%%%%%%%%%%%%%%%%%%%%%%%%%%%%%%%%%%%%%%%%%%%%%%%%
\section{Introduction}
\setcounter{equation}{0}
Integer-order derivatives and integrals have clear physical interpretation and are used for describing different concepts in classical physics. For example, the position of a moving object can be represented as a function of time, the object velocity is then the first derivative of the function, the acceleration is the second derivative and so on. Fractional derivatives and integrals, being generalization of the classical derivative and integrals are expected to have even broader meaning. unfortunately, there is no such result in the literature until now.

For three centuries the theory of fractional derivatives developed  mainly as a pure theoretical field of mathematics useful only for mathematicians.  However, in the last few decades many authors pointed out that derivatives and integrals of non-integer order are very suitable for the  description of properties of various real materials, e.g. polymers. It has  been shown that new fractional-order models are more adequate than previously used integer-order models.

Fractional derivatives provide an excellent instrument for the description of memory and hereditary properties of various materials and processes. This is the main advantage of fractional derivatives in comparison with classical integer-order models, in which such effects are in fact neglected. The advantages of fractional derivatives become apparent in modeling mechanical and electrical properties of real materials, as well as in the description of rheological properties of rocks, and in many other fields.

Fractional integrals and derivatives also appear in the theory of control of dynamical systems, when the controlled system or/and the controller is described by a fractional differential equation \cite{AHR,C,QZ}.

Fractional calculus includes various extensions of the usual definition of derivative from integer to real order, including the Riemann-Liouville derivative, the Caputo derivative, the Riesz derivative, the Weyl derivative, etc. In this paper,we only consider the Caputo derivative that leads to an initial condition which is physically meaningful \cite{SKM}.

These models are relevant, in particular, in the context of spatially disordered systems, porous media, fractal media, turbulent fluids and plasmas, biological media with traps, binding sites or macro-molecular crowding, stock price movements, etc. We refer the readers to \cite{BH,BG,MK} and the rich references therein for the motivation and description of the model. On the other hand, we refer to \cite{CCV,CV,CSS,CS,CKRZ,KNV} and the rich references therein for mathematical analysis of these models.

Let $H$ be a Hilbert space equipped with the norm $\|\,.\,\|_H$, and let $\mathcal{A}:\mathcal{D}(\mathcal{A})\subset H \longrightarrow H$ be a closed and densely defined operator on $H$. We consider the following Cauchy problem described by the mean of the fractional derivative as follow:
\begin{equation}\label{IFE1}
\left\{
\begin{array}{ll}
\ds\partial^{\alpha,\eta}_{t} u(t)=\mathcal{A}u(t)-\frac{\eta}{\Gamma (1-\alpha)}\int_{0}^{t}(t-s)^{-\alpha}\e^{-\eta(t-s)}u(s)\,\ud s,\; t > 0, 
\\
u(0)=u^{0},
\end{array}
\right.
\end{equation}
where $\partial_{t}^{\alpha,\eta}$ denoted the fractional derivative defined by
\begin{equation}\label{IFE2}
\partial_{t}^{\alpha,\eta}v (t)=\frac{1}{\Gamma (1-\alpha)}\,\int_{0}^{t} (t-s)^{-\alpha}\e^{-\eta(t-s)}\,v^\prime(s) \,\ud s,\;0<\alpha<1,\;\eta\geq 0.
\end{equation}
%There are many definitions for fractional derivatives \cite{das}, among which Riemann-Liouville definition and Caputo definitions are most widely used \cite{MJBSR}. 
The main result of this paper concerns the precise asymptotic behavior of the solutions of \eqref{IFE1}. In \cite{C}, the author consider a fractional integro-differential equation equivalent in some sense to system \eqref{IFE1} but for the case $1<\alpha<2$ and $\eta=0$ where he shows how the asymptotic behavior of the continuous solution depends on some parameter $\omega$ where it's assumed that $\mathcal{A}$ is a sectoral operator with  a sector depends on $\omega$. Precisely, if $\omega\geq0$, then the continuous solutions are bounded by an exponential of type $\e^{\omega^{\frac{1}{\alpha}}t}$ and if $\omega<0$, then the solutions show a merely algebraic decay of order $o\left(\frac{1}{\omega t^{\alpha}}\right)$. In this work we prove under some consideration on the resolvent behavior (weaker then the one considered in \cite{C}) that the second kind of behavior still true if $\eta=0$ and one shows also that the energy is exponentially stable if $\eta>0$.

This paper is organized as follows: In section \ref{WPFE} we prove the well-posedness of system \eqref{IFE1} and gives an exponential stability result in the case $\eta>0$. In section \ref{PSFE} we prove that the energy of system \eqref{IFE1}  is polynomially stable for the case $\eta=0$ while in section \ref{WEFE} we consider an integro-differential equation where we prove the well-posedness of the equation and a polynomial decay rate of the energy.
%%%%%%%%%%%%%%%%%%%%%%%%%%%%%%%%%%%%%%%%%%%%%%%%%%%%%%%%%%%%%%%%%%%%%%%%%%%%%%%%%%%%%%%%%%%%%%%%%%%%%%%%%%%%%%%%%%%%%%%%%%%%%%%%%%%%%%%%%%%%%%%%%%%%%%%%%%%%%%%%%%%%%%%%%%%%%%%%%%%%%%%%%%%%%%%%%%%%%%%%%%%%%%%%%%%%%%%%%%%%%%%%%%%%%%%%%%%%%%%%%%%%%%%%%%SECTION%%%%%%%%%%%%%%%%%%%%%%%%%%%%%%%%%%%%%%%%%%%%%%%%%%%%%%%%%%%%%%%%%%%%%%%%%%%%%%%%%%%%%%%%%%%%%%%%%%%%%%%%%%%%%%%%%%%%%%%%%%%%%%%%%%%%%%%%%%%%%%%%%%%%%%%%%%%%%%%%%%%%%%%%%%%%%%%%%%%%%%%%%%%%%%%%%%%%%%%%%%%%%%%%%%%%%%%%%%%%%%%%%%%%%%%%%%%%%%%%%%%%%%%%%%%%
\section{Well-posedness and exponential stabilization}\label{WPFE}
%We define the unbounded operator $\mathcal{A}$ by
%$$
%\mathcal{A}u=-iAw-BB^{*}u
%$$
%with domain
%$$
%\mathcal{D}(\mathcal{A})=\left\{u\in H: iAu+BB^{*}u\in H\right\},
%$$
%then system \eqref{IFE1}-\eqref{IFE2} can be written as the following Cauchy problem
%\begin{equation}\label{WPFE1}
%\left\{\begin{array}{ll}
%\ds\partial_{t}^{\alpha,\eta}u(t)+\frac{\eta}{\Gamma (1-\alpha)}\int_{0}^{t}(t-s)^{-\alpha}\e^{-\eta(t-s)}u(s)\,\ud s=\mathcal{A}u(t),&t>0,
%\\
%u(0)=u^{0}.&
%\end{array}\right.
%\end{equation}
We define the convolution product of $a$ and $u$ by 
$$
a*u(t)=\int_{0}^{t}a(t-s)u(s)\,\ud s,\quad\forall\,t\in\R_{+},\;a\in L_{\mathrm{loc}}^{1}(\R_{+}),\;u\in L_{\mathrm{loc}}^{p}(\R_{+},H),\, p\in[1,+\infty[,
$$
and for $\beta>0$ the functions $g_{\beta}$ are given by
\begin{equation*}
g_{\beta}(t)=\left\{\begin{array}{ll}
\ds\frac{1}{\Gamma(\beta)}t^{\beta-1}&t>0,
\\
0&t\leq0.
\end{array}\right.
\end{equation*}
Noting that these functions satisfy the semigroup property, namely
\begin{equation}\label{WPFE12}
g_{\beta}*g_{\gamma}(t)=g_{\beta+\gamma}(t),\quad\forall\,t>0,\;\beta,\gamma>0.
\end{equation}
The Riemann-Liouville fractional integral of order $0<\alpha<1$ is defined as follow
$$
J^{\alpha}u(t)=g_{\alpha}*u(t),\;\forall\,u\in L_{\mathrm{loc}}^{1}(\R_{+};H),\;t>0.
$$
Using \eqref{WPFE12} we follow that $J^{\alpha}$ verifying the semigroup property,
\begin{equation}\label{WPFE16}
J^{\beta}J^{\gamma}u(t)=J^{\beta+\gamma}u(t),\quad\forall\,t>0,\;\beta,\gamma>0.
\end{equation}
For every $u\in L_{\mathrm{loc}}^{1}(\R_{+};H)$ such that $g_{1-\alpha}*u\in W_{\mathrm{loc}}^{1,1}(\R_{+};H)$, the Riemann-Liouville fractional derivative of order $\alpha$ is defined by
$$
D_{t}^{\alpha}u(t)=(g_{1-\alpha}*u)^{'}(t)=(J^{1-\alpha}u)^{'}(t),\;\forall\,t>0.
$$
The operator $D_{t}^{\alpha}$ is the left inverse and right invertible of $J^{\alpha}$ (see \cite[Theorem 1.5]{bajlekova}) more precisely, we have
\begin{equation}\label{WPFE14}
D_{t}^{\alpha}J^{\alpha}u(t)=u(t),\quad\forall\,u\in L_{\mathrm{loc}}^{1}(\R_{+};H),\;t>0,
\end{equation}
and for every $u\in L_{\mathrm{loc}}^{1}(\R_{+};H)$ such that $g_{1-\alpha}*u\in W_{\mathrm{loc}}^{1,1}(\R_{+};H)$,
\begin{equation}\label{WPFE15}
J^{\alpha}D_{t}^{\alpha}u(t)=u(t)\,\quad\forall\,t>0.
\end{equation}
We recall that the Caputo fractional derivative of order $\alpha>0$ when $\eta=0$ is defined by
$$
\mathrm{\textbf{D}}_{t}^{\alpha}u(t)=J^{1-\alpha}u'(t),\quad\forall\,u\in W_{\mathrm{loc}}^{1,1}(\R_{+};H),\;t>0,
$$
then by integration by parts we follow that when $u\in W_{\mathrm{loc}}^{1,1}(\R_{+};H)$, we have
$$
\mathrm{\textbf{D}}_{t}^{\alpha}u(t)=D_{t}^{\alpha}(u-u(0))(t),\quad\forall\,t>0.
$$
Hence, the Caputo derivative $\mathrm{\textbf{D}}_{t}^{\alpha}$ is a left inverse of $J^{\alpha}$ but in general it is not a right inverse, namely using \eqref{WPFE14} and \eqref{WPFE15} we have 
\begin{equation}\label{WPFE7}
\mathrm{\textbf{D}}_{t}^{\alpha}J^{\alpha}u(t)=u(t),\quad\forall\,u\in L_{\mathrm{loc}}^{1}(\R_{+}),\;t>0,
\end{equation}
and
\begin{equation}\label{WPFE8}
J^{\alpha}\mathrm{\textbf{D}}_{t}^{\alpha}u(t)=u(t)-u(0),\quad\forall\,u\in\mathcal{C}(\R_{+};H),\,g_{1-\alpha}*(u-u(0))\in W_{\mathrm{loc}}^{1,1}(\R_{+};H),\;t>0.
\end{equation}
By setting $v(t)=u(t)\e^{\eta t}$, \eqref{IFE1} is equivalent to the following problem
\begin{equation}\label{WPFE6}
\left\{\begin{array}{ll}
\mathrm{\textbf{D}}_{t}^{\alpha}v(t)=\mathcal{A}v(t),&t>0,
\\
v(0)=u^{0}.&
\end{array}\right.
\end{equation}
Applying $J_{\alpha}$ in both sides of the first line of \eqref{WPFE6} and using \eqref{WPFE7} and \eqref{WPFE8}, we conclude that when $u\in\mathcal{C}(\R_{+};H)$ satisfying $g_{1-\alpha}*(v-u^{0})\in W_{\mathrm{loc}}^{1,1}(\R_{+};H)$ \eqref{WPFE6} is equivalent to the following integral differential equation
\begin{equation}\label{WPFE9}
u(t)=\e^{-\eta t}u^{0}+\e^{-\eta t}(g_{\alpha}*(\e^{\eta.}\mathcal{A}u))(t),\;\forall\,t\geq 0.
\end{equation} 

The well-posedness of a system such as \eqref{IFE1} is related to the notion of what is called solution operator defined as follow:
\begin{defi}
A family $(S_{\alpha,\eta}(t))_{t\geq 0}\in\mathcal{L}(H)$ (denoted simply by $(S_{\alpha}(t))_{t\geq 0}$ if $\eta=0$) is called a solution operator (or a resolvent) for \eqref{IFE1} or for \eqref{WPFE9} if the following conditions are satisfied
\begin{itemize}
	\item[(a)] $S_{\alpha,\eta}(t)$ is strongly continuous for $t\geq0$ and $S_{\alpha,\eta}(0)=I$.
	\item[(b)] $S_{\alpha,\eta}(t)(\mathcal{D}(\mathcal{A}))\subset\mathcal{D}(\mathcal{A})$ and $\mathcal{A}S_{\alpha,\eta}(t)x=S_{\alpha,\eta}(t)\mathcal{A}x$ for all $x\in\mathcal{D}(\mathcal{A})$ and $t\geq0$.
	\item[(c)] The resolvent equation holds 
	$$
	S_{\alpha,\eta}(t)x=\e^{-\eta t}x+\e^{-\eta t}(g_{\alpha}*(\e^{\eta.}\mathcal{A}S_{\alpha,\eta}(.)x))(t),\quad \forall\,x\in\mathcal{D}(\mathcal{A}),\;t\geq 0.
	$$
\end{itemize}
\end{defi}
\begin{defi}
A function $u\in\mathcal{C}(\R_{+},H)$ is called a strong solution of \eqref{WPFE9} if $u\in\mathcal{C}(\R_{+},\mathcal{D}(\mathcal{A}))$ and \eqref{WPFE9} holds on $\R_{+}$.
\end{defi}
\begin{defi}
The problem \eqref{WPFE9} is called well-posed if for any $u^{0}\in\mathcal{D}(\mathcal{A})$, there is a unique strong solution $u(t,u^{0})$ of \eqref{WPFE9}, and when $u_{n}^{0}\in\mathcal{D}(\mathcal{A})$ such that $u_{n}^{0}\,\longrightarrow\,0$ as $n\nearrow+\infty$ then $u(t,u_{n}^{0})\,\longrightarrow\,0$ as $n\nearrow+\infty$ in $H$, uniformly on compact intervals.
\end{defi}
\begin{defi}
A function $u\in\mathcal{C}(\R_{+},H)$ is called a strong solution of \eqref{IFE1} if $u\in\mathcal{C}(\R_{+},\mathcal{D}(\mathcal{A}))$, $g_{1-\alpha}*((\e^{\eta.}u)-u^{0})\in\mathcal{C}^{1}(\R_{+},H)$ and \eqref{IFE1} holds on $\R_{+}$.
\end{defi}
\begin{defi}\label{WPFE17}
The problem \eqref{IFE1} is called well-posed if for any $u^{0}\in\mathcal{D}(\mathcal{A})$, there is a unique strong solution $u(t,u^{0})$ of \eqref{IFE1}, and when $u_{n}^{0}\in\mathcal{D}(\mathcal{A})$ such that $u_{n}^{0}\,\longrightarrow\,0$ as $n\nearrow+\infty$ then $u(t,u_{n}^{0})\,\longrightarrow\,0$ as $n\nearrow+\infty$ in $H$, uniformly on compact intervals.
\end{defi}
\begin{prop}\label{WPFE5}
\begin{enumerate}
	\item[1/] Equation \eqref{WPFE9} is well-posed if and only if it admits a resolvent $S_{\alpha,\eta}(t)$. If this is the case we have in addition $(g_{\alpha}*(\e^{\eta.}S_{\alpha,\eta}(.)x))(t)\in\mathcal{D}(\mathcal{A})$ for all $x\in H$ and $t\geq 0$ and we have
\begin{equation}\label{WPFE11}
S_{\alpha,\eta}(t)x=\e^{-\eta t}x+\e^{-\eta t}\mathcal{A}(g_{\alpha}*(\e^{\eta.}S_{\alpha,\eta}(.)x))(t),\quad \forall\,x\in H,\;t\geq 0.
\end{equation}
	\item[2/] System \eqref{IFE1} is well-posed if and only if \eqref{WPFE9} is well-posed.
\end{enumerate}
\end{prop}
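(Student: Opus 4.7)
The plan is to begin with part~2 and deduce it from the change of variable already indicated in the excerpt. Setting $v(t)=\e^{\eta t}u(t)$ maps a strong solution of \eqref{IFE1} bijectively onto a strong solution of \eqref{WPFE6}, and applying $J^{\alpha}$ to both sides of \eqref{WPFE6}, together with \eqref{WPFE7} and \eqref{WPFE8}, turns \eqref{WPFE6} into \eqref{WPFE9}. Because the regularity condition in the definition of strong solutions of \eqref{IFE1} is precisely what makes these operations well defined, and because the map $u\mapsto v$ is a homeomorphism for the topology of uniform convergence on compact sets, existence, uniqueness and continuous dependence transfer in both directions; this yields part~2.

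For part~1, first assume that a resolvent $S_{\alpha,\eta}(t)$ exists. For $u^{0}\in\mathcal{D}(\mathcal{A})$ set $u(t,u^{0})=S_{\alpha,\eta}(t)u^{0}$. Properties (a) and (b) ensure that $u\in\mathcal{C}(\R_{+},\mathcal{D}(\mathcal{A}))$, and (c) is precisely \eqref{WPFE9}; continuous dependence follows from the boundedness of $S_{\alpha,\eta}(t)$. For uniqueness, any strong solution $u$ with $u^{0}=0$ satisfies $u(t)=\e^{-\eta t}(g_{\alpha}*(\e^{\eta\cdot}\mathcal{A}u))(t)$; iterating this identity $n$ times and using \eqref{WPFE12} produces a convolution kernel of order $g_{n\alpha}$, so for $n$ large enough a Gronwall-type argument forces $u\equiv 0$.

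Conversely, assume that \eqref{WPFE9} is well-posed. Define $S_{\alpha,\eta}(t)u^{0}=u(t,u^{0})$ on $\mathcal{D}(\mathcal{A})$; linearity comes from uniqueness. Continuous dependence combined with the Banach--Steinhaus theorem shows that $S_{\alpha,\eta}(t)$ is uniformly bounded in operator norm on each compact subinterval of $\R_{+}$, hence extends by density to a bounded operator on $H$. Condition (a) follows from this local uniform boundedness and the continuity on $\mathcal{D}(\mathcal{A})$; condition (c) is \eqref{WPFE9} itself; condition (b) comes from the observation that, for $x\in\mathcal{D}(\mathcal{A})$, both $\mathcal{A}S_{\alpha,\eta}(\cdot)x$ and $S_{\alpha,\eta}(\cdot)\mathcal{A}x$ solve \eqref{WPFE9} with the same initial datum $\mathcal{A}x$, so they must coincide.

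It remains to extend \eqref{WPFE11} to all $x\in H$. For $x\in\mathcal{D}(\mathcal{A})$, multiplying condition (c) by $\e^{\eta t}$ and pulling $\mathcal{A}$ out of the convolution via its closedness yields
$$\mathcal{A}\bigl(g_{\alpha}*(\e^{\eta\cdot}S_{\alpha,\eta}(\cdot)x)\bigr)(t)=\e^{\eta t}S_{\alpha,\eta}(t)x-x.$$
Approximating an arbitrary $x\in H$ by a sequence in $\mathcal{D}(\mathcal{A})$, using continuity of the right-hand side in $x$ together with the local uniform bound on $S_{\alpha,\eta}$, and invoking once more the closedness of $\mathcal{A}$, one concludes that $(g_{\alpha}*(\e^{\eta\cdot}S_{\alpha,\eta}(\cdot)x))(t)\in\mathcal{D}(\mathcal{A})$ and that \eqref{WPFE11} holds for every $x\in H$. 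The genuinely delicate point throughout is the uniqueness argument: the Abelian kernel $g_{\alpha}$ is singular at the origin, so the standard Gronwall lemma does not apply directly, and one must iterate the integral equation and exploit the semigroup property \eqref{WPFE12} to reach a kernel regular enough to conclude.
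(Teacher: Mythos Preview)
Your argument is correct, but it takes a different route from the paper's. For part~1, the paper reduces immediately to the case $\eta=0$ via the substitution $v(t)=\e^{\eta t}u(t)$, rewriting \eqref{WPFE9} as the Volterra equation $v(t)=u^{0}+g_{\alpha}*\mathcal{A}v(t)$, and then invokes \cite[Proposition~1.1]{pruss2} as a black box; the resolvent for \eqref{WPFE9} is simply $S_{\alpha,\eta}(t)=\tilde{S}_{\alpha}(t)\e^{-\eta t}$, where $\tilde{S}_{\alpha}$ is the resolvent supplied by Pr\"uss. You instead reconstruct the content of that proposition from scratch: iteration of the kernel for uniqueness, Banach--Steinhaus for local uniform boundedness, closedness of $\mathcal{A}$ for the extension of \eqref{WPFE11} to all of $H$. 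This is more self-contained and makes the mechanism explicit, at the cost of length. One small imprecision: your verification of condition~(b) invokes uniqueness with initial datum $\mathcal{A}x$, which, in the sense of the well-posedness definition, requires $\mathcal{A}x\in\mathcal{D}(\mathcal{A})$; the clean way is to prove (b) first for $x\in\mathcal{D}(\mathcal{A}^{2})$ and then extend by density using the boundedness already obtained.

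For part~2, the paper is more explicit in the direction ``\eqref{WPFE9} well-posed $\Rightarrow$ \eqref{IFE1} well-posed'': the only nontrivial point is to check the regularity $g_{1-\alpha}*(v-u^{0})\in\mathcal{C}^{1}(\R_{+},H)$ required by the definition of a strong solution of \eqref{IFE1}. The paper obtains this by convolving the integral equation with $g_{1-\alpha}$ and using \eqref{WPFE12} to get $g_{1-\alpha}*(v-u^{0})(t)=\mathcal{A}\int_{0}^{t}v(s)\,\ud s$, from which $\mathcal{C}^{1}$-regularity is immediate since $v\in\mathcal{C}(\R_{+},\mathcal{D}(\mathcal{A}))$. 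You allude to this step but do not carry it out; it is short, but it is where the work in part~2 actually lies.
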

\begin{proof}
\begin{enumerate}
	\item[1/] Problem \eqref{WPFE9} can be written as follows:
	\begin{equation}\label{WPFE10}
	v(t)=u^{0}+g_{\alpha}*\mathcal{A}v(t),\quad t\geq 0,
	\end{equation}
	where we denoted by $v(t)=\e^{\eta t}u(t)$. Following to \cite[Proposition 1.1]{pruss2}, \eqref{WPFE10} is well-posed if and only if it admits a solution operator $\tilde{S}_{\alpha}(t)$. The result follow easily by observing that $S_{\alpha,\eta}(t)=\tilde{S}_{\alpha}(t)\e^{-\eta t}$ is a solution operator of \eqref{WPFE9} and then \eqref{WPFE11} holds. 
	\item[2/] The first implication follows easily from the definitions. Now suppose that \eqref{WPFE9} is well-posed and let $u$ its solution and let $v$ as above solution of \eqref{WPFE10}. To prove the result we only have to prove that $g_{1-\alpha}*(v-u^{0})\in\mathcal{C}^{1}(\R_{+},H)$. Since $v \in\mathcal{C}(\R_{+},\mathcal{D}(\mathcal{A}))$ then the convolution of \eqref{WPFE10} with $g_{1-\alpha}$ gives
	$$
	g_{1-\alpha}*(v-u^{0})(t)=\mathcal{A}\int_{0}^{t}v(s)\,\ud s,\quad t\geq 0,
	$$
	where we have used \eqref{WPFE12}. Then it is easy to show that $g_{1-\alpha}*(v-u^{0})\in\mathcal{C}^{1}(\R_{+},H)$.
\end{enumerate}
This completes the proof.
\end{proof}
\begin{defi}
The solution operator $S_{\alpha}(t)$ ($\eta=0$) is called exponentially bounded if there are $M\geq1$ and $\omega\geq 0$ such that
\begin{equation}\label{WPFE2}
\|S_{\alpha}(t)\|_{\mathcal{L}(H)}\leq M\e^{\omega t}.
\end{equation}
The operator $\mathcal{A}$ is said to belong to $\mathscr{C}^{\alpha}(M,\omega)$ if the problem \eqref{IFE1} has a solution operator $S_{\alpha}(t)$ satisfying \eqref{WPFE2}. Denote $\mathscr{C}^{\alpha}(\omega)=\cup\left\{\mathscr{C}^{\alpha}(M,\omega):\;M\geq1\right\}$ and $\mathscr{C}^{\alpha}=\cup\left\{\mathscr{C}^{\alpha}(\omega):\;\omega\geq0\right\}$.
\end{defi}
For $\theta\in[0,\pi)$ we denote by
$$
\Sigma_{\theta}=\left\{z\in\mathbb{C}^{*}:\;|\arg(z)|<\theta\right\}.
$$
\begin{defi}
A solution operator $S_{\alpha}(t)$ of \eqref{WPFE6} is called analytic if $S_{\alpha}(t)$ admits an analytic extension to a sector $\Sigma_{\theta_{0}}$ for some $\ds\theta_{0}\in(0,\frac{\pi}{2}]$. An analytic solution operator is said to be of analytic type $(\theta_{0},\omega_{0})$ if for each $\theta<\theta_{0}$ and $\omega>\omega_{0}$ there is $M=M(\theta,\omega)$ such that
\begin{equation*}
\|S_{\alpha}(t)\|_{\mathcal{L}(H)}\leq M\e^{\omega \re(t)},\;\forall\,t\in\Sigma_{\theta}.
\end{equation*}
Where $\re(\lambda)$ stands for the real part of $\lambda$. where $\re(\lambda)$ stands for the real part of $\lambda$. The set of all operators $\mathcal{A}\in\mathscr{C}^{\alpha}$, generating analytic solution operator $S_{\alpha,\eta}(t)$ of type $(\theta_{0},\omega_{0})$ is denoted by $\mathscr{A}^{\alpha}(\theta_{0},\omega_{0})$. In addition, denote $\ds\mathscr{A}^{\alpha}(\theta_{0})=\cup\left\{\mathscr{A}^{\alpha}(\theta_{0},\omega_{0}):\,\omega_{0}\in\R_{+}\right\}$ and $\ds\mathscr{A}^{\alpha}=\cup\left\{\mathscr{A}^{\alpha}(\theta_{0}):\,\theta_{0}\in]0,\frac{\pi}{2}]\right\}$.
\end{defi}
\begin{prop}\cite[Corollary 2.17]{bajlekova}\label{WPFE4}
Suppose that $\left\{\lambda:\,\re(\lambda)>0\right\}\subset\rho(\mathcal{A})$ and for some $C>0$ we have
\begin{equation}\label{WPFE3}
\|(\lambda I-\mathcal{A})^{-1}\|_{\mathcal{L}(H)}\leq\frac{C}{\re(\lambda)},\quad\forall\,\lambda\in\rho(\mathcal{A}),\;\re(\lambda)>0.
\end{equation}
Then for any $\alpha\in(0,1)$, $\ds\mathcal{A}\in\mathscr{A}^{\alpha}\left(\min\left\{\left(\frac{1}{\alpha}-1\right),1\right\}\frac{\pi}{2},0\right)$.
\end{prop}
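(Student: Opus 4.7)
The plan is to construct the solution operator $S_{\alpha}(z)$ directly via a Dunford/Hankel contour integral, exploiting the fact that the hypothesis on the resolvent in the right half-plane extends to a larger sector in the $\lambda$-plane after the substitution $\mu=\lambda^{\alpha}$. Concretely, for $|\arg(\lambda)|<\pi/(2\alpha)$ the point $\lambda^{\alpha}$ lies in $\{\re\mu>0\}\subset\rho(\mathcal{A})$, and the hypothesis yields
$$
\big\|(\lambda^{\alpha}I-\mathcal{A})^{-1}\big\|_{\mathcal{L}(H)}\leq\frac{C}{|\lambda|^{\alpha}\cos(\alpha\arg(\lambda))},
$$
so $\lambda^{\alpha-1}(\lambda^{\alpha}I-\mathcal{A})^{-1}=O(1/|\lambda|)$ on any proper subsector of $\{|\arg\lambda|<\pi/(2\alpha)\}$. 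For a parameter $\gamma$ with $\pi/2<\gamma<\min\{\pi,\pi/(2\alpha)\}$ I would introduce the Hankel contour $\Gamma_{\gamma}$ consisting of the two rays $\arg\lambda=\pm\gamma$ joined by a small arc around the origin, oriented counterclockwise, and set
$$
S_{\alpha}(z)=\frac{1}{2\pi i}\int_{\Gamma_{\gamma}}\e^{\lambda z}\lambda^{\alpha-1}(\lambda^{\alpha}I-\mathcal{A})^{-1}\,\ud\lambda.
$$

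The central analytic estimate is that on the ray $\arg\lambda=\pm\gamma$ one has $|\e^{\lambda z}|=\e^{|\lambda||z|\cos(\gamma\pm\arg z)}$, and when $z\in\Sigma_{\gamma-\pi/2}$ the angle $\gamma\pm\arg z$ lies strictly in $(\pi/2,3\pi/2)$, so the cosine is negative and the exponential gives integrable decay that dominates the $O(1/|\lambda|)$ algebraic factor. Hence the integral converges absolutely, uniformly on compact subsets of $\Sigma_{\gamma-\pi/2}$, and defines a holomorphic $\mathcal{L}(H)$-valued function of $z$. The bound $\|S_{\alpha}(z)\|\leq M\e^{\omega\,\re z}$ for every $\omega>0$ and $\theta<\gamma-\pi/2$ follows after parametrising $\lambda=r\e^{\pm i\gamma}$, splitting the integral into a bounded piece near the origin and tail integrals of the form $\int_{0}^{\infty}\e^{-r|z|\delta}r^{-1}\ud r$, and absorbing the mild behaviour at $r=0$ into the exponential via the additional factor $\e^{\omega\,\re z}$.

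The properties (a)-(c) of a solution operator are then checked in the standard way. For (a), plug $u^{0}\in\mathcal{D}(\mathcal{A})$ into the integral, write $\lambda^{\alpha-1}(\lambda^{\alpha}-\mathcal{A})^{-1}u^{0}=\lambda^{-1}u^{0}+\lambda^{-1}(\lambda^{\alpha}-\mathcal{A})^{-1}\mathcal{A}u^{0}$: the first term produces $u^{0}$ via the residue at the origin (closing $\Gamma_{\gamma}$ to a small circle), and the second vanishes as $z\to 0$ by dominated convergence since it is $O(|\lambda|^{-1-\alpha})$. Property (b) is immediate because $(\lambda^{\alpha}I-\mathcal{A})^{-1}$ commutes with $\mathcal{A}$ on $\mathcal{D}(\mathcal{A})$ and commutation passes through the integral. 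For (c), I would recognise that $\lambda^{\alpha-1}(\lambda^{\alpha}-\mathcal{A})^{-1}$ is the Laplace transform of $S_{\alpha}$ (the Bromwich inversion of the defining contour, justified by deforming $\Gamma_{\gamma}$ back to a vertical line with large real part where the integrand is absolutely integrable), apply the algebraic identity $\lambda^{\alpha-1}(\lambda^{\alpha}-\mathcal{A})^{-1}=\lambda^{-1}+\lambda^{\alpha-1}(\lambda^{\alpha}-\mathcal{A})^{-1}\lambda^{-\alpha}\mathcal{A}$, and invert the transform to recover the convolution identity in (c) with $\eta=0$.

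Finally the optimal sector. The geometric constraints are $\gamma<\pi/(2\alpha)$ (so that the resolvent estimate holds on the contour) and $\gamma<\pi$ (so that the contour does not wrap around), together with the convergence constraint $\theta<\gamma-\pi/2$. Letting $\gamma$ approach $\min\{\pi,\pi/(2\alpha)\}$ yields any $\theta<\min\{1,1/\alpha-1\}\,\pi/2$, which is exactly the stated angle $\theta_{0}$; the choice $\omega_{0}=0$ is possible because the resolvent hypothesis is available on the whole right half-plane, so no positive exponential shift is needed. The main obstacle I anticipate is the rigorous identification of the contour integral with a solution operator in the sense of the definition, specifically checking the resolvent equation (c): one has to justify the use of the convolution theorem for the Laplace transform on $\Le$-valued distributions and control the boundary term at $\lambda=0$ arising from $\lambda^{\alpha-1}$, which requires the deformation argument between the Hankel contour $\Gamma_{\gamma}$ and a Bromwich-type vertical line, rather than any new estimate.
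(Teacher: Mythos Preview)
The paper does not prove this proposition at all; it is quoted verbatim as \cite[Corollary 2.17]{bajlekova} and used as a black box. So there is no ``paper's own proof'' to compare against.

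That said, your contour-integral construction is exactly the argument underlying the cited result. In Bajlekova's thesis the proof is organised in two steps: first a characterisation theorem (Theorem~2.14 there) stating that $\mathcal{A}\in\mathscr{A}^{\alpha}(\theta_{0},\omega_{0})$ if and only if $\lambda^{\alpha}\in\rho(\mathcal{A})$ for $\lambda\in\omega_{0}+\Sigma_{\theta_{0}+\pi/2}$ with the estimate $\|\lambda^{\alpha-1}(\lambda^{\alpha}I-\mathcal{A})^{-1}\|_{\mathcal{L}(H)}\leq M/|\lambda-\omega|$ on each proper subsector; the sufficiency direction of that theorem is precisely the Hankel/Dunford integral you wrote down, with the same convergence analysis and the same verification of properties~(a)--(c). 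Corollary~2.17 is then the short observation that the half-plane resolvent bound \eqref{WPFE3}, pushed through the substitution $\mu=\lambda^{\alpha}$, yields the required sectorial estimate on the sector of half-angle $\min\{\pi,\pi/(2\alpha)\}$, which is again the geometric computation you carried out. Your sketch is therefore correct and coincides with the source the paper invokes; the only place Bajlekova is slightly more economical is that the verification of the resolvent equation~(c) is folded into the general characterisation theorem rather than redone for this particular contour, but the content is identical.
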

\begin{thm}\label{WPEFE17}
Suppose that $\mathcal{A}$ is a m-dissipative operator on the Hilbert space $H$, then $\mathcal{A}$ generates a solution operator $S_{\alpha,\eta}(t)$ and system \eqref{IFE1} is well-posed. In particular, when $\eta>0$ and $\alpha \in (0,1)$ system \eqref{IFE1} is exponentially stable and for some $M>0$ we have
\begin{equation}\label{WPFE13}
\|S_{\alpha,\eta}(t)u^{0}\|_{H}\leq M\e^{-\eta t}\|u^{0}\|_{H},\;\forall\,u^{0}\in\mathcal{D}(\mathcal{A}),\;t\geq 0.
\end{equation}
\end{thm}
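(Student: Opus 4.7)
The plan is to reduce \eqref{IFE1} to the $\eta=0$ problem via the substitution $v(t)=e^{\eta t}u(t)$ already exploited in Proposition \ref{WPFE5}, then invoke Proposition \ref{WPFE4} to produce the solution operator, and finally upgrade the qualitative statement to a uniform contractivity estimate in order to convert the factor $e^{-\eta t}$ into genuine exponential decay.

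First I would use m-dissipativity of $\mathcal{A}$ to check the resolvent hypothesis of Proposition \ref{WPFE4}. Since $\mathcal{A}$ is m-dissipative, the half plane $\{\lambda:\re(\lambda)>0\}$ is contained in $\rho(\mathcal{A})$ and one has the standard bound $\|(\lambda I-\mathcal{A})^{-1}\|_{\mathcal{L}(H)}\le 1/\re(\lambda)$, which is exactly \eqref{WPFE3} with $C=1$. Proposition \ref{WPFE4} then yields $\mathcal{A}\in\mathscr{A}^{\alpha}\!\left(\min\{1/\alpha-1,1\}\pi/2,\,0\right)$, so the $\eta=0$ problem \eqref{WPFE10} admits an (analytic) solution operator $\widetilde{S}_{\alpha}(t)$. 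By Proposition \ref{WPFE5}(1), equation \eqref{WPFE9} is then well-posed for every $\eta\ge 0$, with resolvent $S_{\alpha,\eta}(t)=e^{-\eta t}\widetilde{S}_{\alpha}(t)$; applying Proposition \ref{WPFE5}(2) transfers the well-posedness to \eqref{IFE1}. This settles the qualitative half of the theorem.

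For the exponential stability when $\eta>0$, it remains to prove a uniform bound $\|\widetilde{S}_{\alpha}(t)\|_{\mathcal{L}(H)}\le M$ on $[0,\infty)$, since then $\|S_{\alpha,\eta}(t)\|_{\mathcal{L}(H)}\le Me^{-\eta t}$ gives \eqref{WPFE13}. The cleanest route is the subordination principle. Since $\mathcal{A}$ is m-dissipative, the Lumer--Phillips theorem produces a contraction $C_{0}$-semigroup $(T(s))_{s\ge 0}$ generated by $\mathcal{A}$. The subordination formula (for instance \cite[Theorem 3.1]{bajlekova}) represents the $\alpha$-resolvent family as
\begin{equation*}
\widetilde{S}_{\alpha}(t)x=\int_{0}^{+\infty}\varphi_{t,\alpha}(s)\,T(s)x\,\ud s,\qquad x\in H,\ t>0,
\end{equation*}
where $\varphi_{t,\alpha}\ge 0$ is a stable probability density, i.e.\ $\int_{0}^{+\infty}\varphi_{t,\alpha}(s)\,\ud s=1$. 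Combining $\|T(s)\|_{\mathcal{L}(H)}\le 1$ with this identity gives $\|\widetilde{S}_{\alpha}(t)\|_{\mathcal{L}(H)}\le 1$, hence \eqref{WPFE13} with $M=1$.

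The main obstacle is exactly this last step: Proposition \ref{WPFE4} only furnishes analyticity of type $(\theta_{0},0)$, i.e.\ bounds of the form $M(\omega)e^{\omega\re(t)}$ for every $\omega>0$, and one cannot simply let $\omega\downarrow 0$ because $M(\omega)$ may blow up. Using the contractivity of $T(s)$ through subordination bypasses that difficulty, and the Hilbert space m-dissipative setting is precisely what ensures $T$ is a contraction semigroup so that the stable density normalization $\int\varphi_{t,\alpha}=1$ delivers a clean contraction bound on $\widetilde{S}_{\alpha}(t)$.
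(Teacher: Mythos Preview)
Your proof is correct and follows the paper's overall approach: verify the resolvent bound \eqref{WPFE3} from m-dissipativity, invoke Proposition~\ref{WPFE4} to obtain the solution operator $\widetilde{S}_{\alpha}$ for the $\eta=0$ problem, transfer well-posedness to \eqref{IFE1} via Proposition~\ref{WPFE5}, and then read off exponential decay from $S_{\alpha,\eta}(t)=e^{-\eta t}\widetilde{S}_{\alpha}(t)$.

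The one place you diverge is in justifying the uniform bound $\|\widetilde{S}_{\alpha}(t)\|_{\mathcal{L}(H)}\le M$. The paper simply asserts this, implicitly treating it as part of the conclusion of Proposition~\ref{WPFE4}. You correctly point out that, as stated, analytic type $(\theta_{0},0)$ only yields $M(\omega)e^{\omega t}$ for each $\omega>0$, and you close the gap via the subordination formula (stated later in the paper as Proposition~\ref{PSFE4}) combined with the contractivity of the $C_{0}$-semigroup generated by $\mathcal{A}$. This is a cleaner and more explicit argument than the paper's, and it even gives the sharper constant $M=1$; the paper only appeals to subordination in Section~\ref{PSFE} for the polynomial decay result, whereas you use it already here.
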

\begin{proof}
%Let $u\in H$ then we have
%$$
%\re\langle\mathcal{A}u,u\rangle=-\|B^{*}u\|_{U}^{2},
%$$
%where $\langle.,.\rangle$ stands for the scalar product in the Hilbert space $H$. This shows that $\mathcal{A}$ is dissipative, then with the assumption made in this theorem 
Since the operator $\mathcal{A}$ is m-dissipative then by \cite[Proposition 3.1.9]{tucsnakweinss} property \eqref{WPFE3} holds. According to Proposition \ref{WPFE4}, $\mathcal{A}$ is a generator of solution operator $S_{\alpha}(t)$ of \eqref{WPFE6}, therefore Proposition \ref{WPFE5} leads to the well-posedness of the problem \eqref{WPFE6} and consequently the well-posedness of \eqref{IFE1} since $v(t)=u(t)\e^{\eta t}$. Besides, for some constant $M>0$ we have $v(t)=S_{\alpha}(t)u^{0}$ satisfies
$$
\|S_{\alpha}(t)u^{0}\|_{H}\leq M\|u^{0}\|_{H},\;\forall\,t\geq 0.
$$
Therefore, the solution operator since $u(t)=v(t)\e^{-\eta t}$ and \eqref{WPFE13} holds. This completes the proof.
\end{proof}

\begin{exams}
As examples we consider the following systems:

\begin{equation}\label{EXA1}
\left\{
\begin{array}{ll}
\ds\partial^{\alpha,\eta}_{t} u(t,x)+ (i \Delta + a(x))u(t,x)+\frac{\eta}{\Gamma (1-\alpha)}\int_{0}^{t}(t-s)^{-\alpha}\e^{-\eta(t-s)}u(s,x)\,\ud s=0,\; t > 0, x \in \Omega,\\
u = 0, \, (0,+\infty) \times \partial \Omega,
\\
u(0,x)=u^{0}(x), \, x \in \Omega,
\end{array}
\right.
\end{equation}

and 

\begin{equation}\label{EXA2}
\left\{
\begin{array}{ll}
\ds\partial^{\alpha,\eta}_{t} u(t,x)+ (i \Delta + i) u(t,x) +\frac{\eta}{\Gamma (1-\alpha)}\int_{0}^{t}(t-s)^{-\alpha}\e^{-\eta(t-s)}u(s,x)\,\ud s=0,\; t > 0, x \in \Omega, \\
\partial_\nu u  = i b(x) \, u, \, (0,+\infty) \times \partial \Omega,
\\
u(0,x)=u^{0}(x), \, x \in \Omega,
\end{array}
\right.
\end{equation}
where $\Omega$ is a smooth bounded open domain of $\R^n,$ $\partial_\nu = \nu . \nabla$ is the derivative along $\nu$, the unit normal vector pointing outward of $\Omega$
and $a \in L^\infty (\Omega), b \in L^\infty(\partial \Omega)$ are non-identically  zero and non-negative functions. 

\medskip

By a direct implication of Theorem \ref{WPEFE17} we obtain for $\eta>0$ exponential stability results for \rfb{EXA1} and \rfb{EXA2} without any geometric conditions (see \cite{blr} for example) on the supports of $a$ and $b$.

\end{exams}
%%%%%%%%%%%%%%%%%%%%%%%%%%%%%%%%%%%%%%%%%%%%%%%%%%%%%%%%%%%%%%%%%%%%%%%%%%%%%%%%%%%%%%%%%%%%%%%%%%%%%%%%%%%%%%%%%%%%%%%%%%%%%%%%%%%%%%%%%%%%%%%%%%%%%%%%%%%%%%%%%%%%%%%%%%%%%%%%%%%%%%%%%%%%%%%%%%%%%%%%%%%%%%%%%%%%%%%%%%%%%%%%%%%%%%%%%%%%%%%%%%%%%%%%%%SECTION%%%%%%%%%%%%%%%%%%%%%%%%%%%%%%%%%%%%%%%%%%%%%%%%%%%%%%%%%%%%%%%%%%%%%%%%%%%%%%%%%%%%%%%%%%%%%%%%%%%%%%%%%%%%%%%%%%%%%%%%%%%%%%%%%%%%%%%%%%%%%%%%%%%%%%%%%%%%%%%%%%%%%%%%%%%%%%%%%%%%%%%%%%%%%%%%%%%%%%%%%%%%%%%%%%%%%%%%%%%%%%%%%%%%%%%%%%%%%%%%%%%%%%%%%%%%
\section{Polynomial stabilization}\label{PSFE}
The aim of this section is to establish a polynomial stabilization result of the system \eqref{IFE1} only for the case $\eta=0$. For this purpose we introduce first some properties of the Mittag-Leffler function (\cite[Chapter XVIII]{EMOT} and \cite[chapter 1]{podlubny}) $E_{\alpha,\beta}$ defined by 
$$
E_{\alpha,\beta}(z)=\sum_{n=0}^{+\infty}\frac{z^{n}}{\Gamma(\alpha n+\beta)}=\frac{1}{2i\pi}\int_{C}\frac{\mu^{\alpha-\beta}\e^{\mu}}{\mu^{\alpha}-z}\,\ud\mu,\quad\forall\,z\in\mathbb{C},\;\alpha,\beta>0,
$$
where $C$ is a contour which starts and ends at $-\infty$ and encircles the disc $D=\{\mu\in\mathbb{C}:\;|\mu|\leq|z|^{\frac{1}{\alpha}}\}$ counter-clockwise. For short, we denote $E_{\alpha}(z)=E_{\alpha,1}(z)$. The first property claims (see \cite[Theorem 1.6]{podlubny}) that for every $\beta>0$ and $0<\alpha<2$, there exists a constant $c>0$, such that
\begin{equation}\label{PSFE1}
|E_{\alpha,\beta}(-t)|\leq\frac{c}{1+t},\quad\forall\,t>0.
\end{equation}
Consider also the function of Wright type $\phi_{\gamma}$ (see \cite{GLM,Mainardi,wright}) given by
$$
\Phi_{\gamma}(z)=\sum_{n=0}^{+\infty}\frac{(-z)^{n}}{n!\Gamma(1-\gamma(n+1))}=\frac{1}{2i\pi}\int_{C'}\mu^{\gamma-1}\e^{\mu-z\mu^{\gamma}}\,\ud\mu,\quad 0<\gamma<1,
$$
where $C'$ is a contour which starts and ends at $-\infty$ and encircles the origin once counter-clockwise. The relationship between the Mittag-Leffler function $E_{\gamma}$ and the function of Wright type $\Phi_{\gamma}$ is given by
\begin{equation}\label{PSFE2}
E_{\gamma}(z)=\int_{0}^{+\infty}\Phi_{\gamma}(t)\,\e^{zt}\,\ud t,\quad \forall\,z\in\mathbb{C},\;0<\gamma<1.
\end{equation}
That is, $E_{\gamma}(-z)$ is the Laplace transform of $\Phi_{\gamma}$ in the whole complex plane. Therefore, $\Phi_{\gamma}$ is a probability density function,
\begin{equation}\label{PSFE3}
\Phi_{\gamma}(t)\geq 0,\; \forall\,t>0;\quad\text{and}\quad \int_{0}^{+\infty}\Phi_{\gamma}(t)\,\ud t=1.
\end{equation}
One of the main ingredients of this section is the following proposition.
\begin{prop}\cite[Theorem 3.1]{bajlekova}\label{PSFE4}
Let $0<\alpha<\beta\leq 2$, $\ds\gamma=\frac{\alpha}{\beta}$ and $\omega\geq 0$. If $\mathcal{A}\in\mathscr{C}^{\beta}(\omega)$ then $\ds\mathcal{A}\in\mathscr{C}^{\alpha}(\omega^{\frac{1}{\gamma}})$ and the following representation holds
\begin{equation}\label{PSFE11}
S_{\alpha}(t)=\int_{0}^{+\infty}\varphi_{t,\gamma}(s)S_{\beta}(s)\,\ud s,\quad\forall\,t>0,
\end{equation}
where $\varphi_{t,\gamma}(s)=t^{-\gamma}\Phi_{\gamma}(st^{-\gamma})$. The identity \eqref{PSFE11} holds in the strong sense.
\end{prop}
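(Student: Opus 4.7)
The strategy is a subordination principle: define a candidate operator
$$T(t) := \int_0^{+\infty}\varphi_{t,\gamma}(s)\, S_\beta(s)\,\ud s, \qquad t>0,$$
as a Bochner integral in $\mathcal{L}(H)$, verify it is a solution operator for the $\alpha$-problem, and deduce the growth bound. Convergence is immediate: using $\|S_\beta(s)\|\le M\e^{\omega s}$ and the change of variable $u=st^{-\gamma}$, one gets
$$\int_0^{+\infty}\varphi_{t,\gamma}(s)\,\|S_\beta(s)\|\,\ud s \le M\int_0^{+\infty}\Phi_\gamma(u)\,\e^{\omega u t^{\gamma}}\,\ud u = M\, E_\gamma(\omega t^{\gamma}),$$
which is finite by the series representation of $E_\gamma$ and by \eqref{PSFE3}.

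The three defining properties of a solution operator are then checked in turn. For (a), $T(t)$ is strongly continuous and $T(t)x\to x$ as $t\to 0^+$ because $\varphi_{t,\gamma}$ acts as an approximate identity concentrating at $s=0$ (by the scaling $\varphi_{t,\gamma}(s)=t^{-\gamma}\Phi_\gamma(st^{-\gamma})$ and $\int \Phi_\gamma =1$) and $S_\beta(0)=I$. For (b), closedness of $\mathcal{A}$ together with $S_\beta(s)\mathcal{A}x=\mathcal{A}S_\beta(s)x$ on $\mathcal{D}(\mathcal{A})$ lets one push $\mathcal{A}$ through the integral.

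The heart of the proof is (c), the resolvent equation, which I would verify on the Laplace side. The crucial identity (derivable from the contour-integral representation of $\Phi_\gamma$, extending \eqref{PSFE2} by analyticity) is
$$\int_0^{+\infty}\e^{-\lambda t}\,t^{-\gamma}\Phi_\gamma(st^{-\gamma})\,\ud t = \lambda^{\gamma-1}\e^{-s\lambda^{\gamma}},\qquad \re(\lambda)>0.$$
Combined with Fubini and with $\widehat{S_\beta}(\mu)=\mu^{\beta-1}(\mu^{\beta}-\mathcal{A})^{-1}$ (consequence of \eqref{WPFE11} after Laplace transform), this yields
$$\widehat{T}(\lambda) = \lambda^{\gamma-1}\,\widehat{S_\beta}(\lambda^{\gamma}) = \lambda^{\gamma-1}(\lambda^{\gamma})^{\beta-1}(\lambda^{\gamma\beta}-\mathcal{A})^{-1} = \lambda^{\alpha-1}(\lambda^{\alpha}-\mathcal{A})^{-1},$$
which is the Laplace transform a resolvent $S_\alpha(t)$ must have. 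By uniqueness of Laplace transforms of strongly continuous, exponentially bounded families, $T(t)=S_\alpha(t)$.

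Finally, the norm estimate from the first display, combined with the known asymptotic $E_\gamma(r)\sim \gamma^{-1}\e^{r^{1/\gamma}}$ as $r\to+\infty$ for $0<\gamma<1$, gives $\|S_\alpha(t)\|\le \tilde M\,\e^{\omega^{1/\gamma}t}$, so $\mathcal{A}\in\mathscr{C}^{\alpha}(\omega^{1/\gamma})$. The main obstacle is the Laplace identity for $\varphi_{t,\gamma}$: it is what actually encodes the subordination $\lambda^\alpha=(\lambda^\gamma)^\beta$, and all the rest — Fubini, closed-operator manipulations, and the approximate-identity argument at $t=0$ — are technical but routine once the exponential bound in Step~1 is in hand.
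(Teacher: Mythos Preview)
The paper does not give its own proof of this proposition: it is stated with the citation \cite[Theorem~3.1]{bajlekova} and used as a black box in the proof of Theorem~\ref{PSFE10}. So there is nothing in the paper to compare your argument against.

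That said, your outline is essentially the subordination argument of Bajlekova: define the candidate via the Wright kernel, check the Bochner integral converges by the bound $\int_0^\infty \Phi_\gamma(u)\e^{\omega u t^\gamma}\,\ud u = E_\gamma(\omega t^\gamma)$, and verify the resolvent identity on the Laplace side using $\int_0^\infty \e^{-\lambda t}\varphi_{t,\gamma}(s)\,\ud t = \lambda^{\gamma-1}\e^{-s\lambda^\gamma}$ together with $\widehat{S_\beta}(\mu)=\mu^{\beta-1}(\mu^\beta-\mathcal{A})^{-1}$. The growth conclusion from the asymptotics $E_\gamma(r)\sim \gamma^{-1}\e^{r^{1/\gamma}}$ is also the standard route. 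Two places where a referee would ask you to say more: the approximate-identity claim for $\varphi_{t,\gamma}$ at $t\to 0^+$ (you need a tail estimate on $\Phi_\gamma$ to control the mass outside a neighborhood of $0$, not just $\int\Phi_\gamma=1$), and the Fubini step in the Laplace computation (justified once the exponential bound is established for $\re\lambda$ large). These are routine, and your identification of the Laplace identity as the crux is exactly right.
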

The main result of this section is given by the following theorem.
\begin{thm}\label{PSFE10}
We suppose that $\mathcal{A}$ generates a $C_{0}$-semigroup on the Hilbert space $H$ such that the following properties hold,
\begin{equation}\label{PSFE6}
\re (\lambda) < 0, \, \forall \, \lambda \in \sigma(\mathcal{A}),\quad\text{and}\quad\sup_{\re (\lambda) \geq 0}\|(\lambda I-\mathcal{A})^{-1}\|_{\mathcal{L}(H)}<+\infty.
\end{equation}
Then \eqref{IFE1} admits a solution operator $S_{\alpha}(t)$ such that there exists $c>0$
\begin{equation}\label{PSFE7}
\|S_{\alpha}(t)\|_{\mathcal{L}(H)}\leq\frac{c}{1+t^{\alpha}},\quad\forall\,t\geq0.
\end{equation}
\end{thm}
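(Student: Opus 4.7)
My plan is to combine the Gearhart--Pr\"uss--Huang characterization of exponential stability on a Hilbert space with the subordination formula of Proposition \ref{PSFE4}. The strategy is that the resolvent hypothesis \eqref{PSFE6} is already exactly the classical condition for exponential decay of the $C_{0}$-semigroup $T(t)$ generated by $\mathcal{A}$, and then subordinating from $\beta=1$ to the fractional order $\alpha$ converts that exponential decay into a Mittag-Leffler decay, which by \eqref{PSFE1} is polynomial of the right order.

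First I would invoke the Gearhart--Pr\"uss--Huang theorem on Hilbert space: given that $\mathcal{A}$ generates a $C_{0}$-semigroup $T(t)$, that $\sigma(\mathcal{A})\subset\{\re(\lambda)<0\}$, and that $\sup_{\re(\lambda)\geq 0}\|(\lambda I-\mathcal{A})^{-1}\|_{\mathcal{L}(H)}<+\infty$, the semigroup is exponentially stable. Thus there exist $M\geq 1$ and $\omega>0$ with $\|T(t)\|_{\mathcal{L}(H)}\leq M\e^{-\omega t}$ for all $t\geq 0$; in the notation of the previous section this means $\mathcal{A}\in\mathscr{C}^{1}(0)$, and \emph{a fortiori} $\mathcal{A}\in\mathscr{C}^{1}(\omega')$ for any $\omega'\geq 0$.

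Next I would apply Proposition \ref{PSFE4} with $\beta=1$, $\gamma=\alpha$. This produces a solution operator $S_{\alpha}(t)$ for \eqref{IFE1} with $\eta=0$ together with the representation
$$
S_{\alpha}(t)=\int_{0}^{+\infty} t^{-\alpha}\Phi_{\alpha}(s\,t^{-\alpha})\,T(s)\,\ud s,\qquad t>0.
$$
Taking operator norms, using $\Phi_{\alpha}\geq 0$ from \eqref{PSFE3} together with the exponential bound on $T(s)$, and then performing the substitution $u=s\,t^{-\alpha}$ and the Laplace identity \eqref{PSFE2} with $z=-\omega t^{\alpha}$, one obtains
$$
\|S_{\alpha}(t)\|_{\mathcal{L}(H)}\leq M\int_{0}^{+\infty}\Phi_{\alpha}(u)\,\e^{-\omega u t^{\alpha}}\,\ud u = M\,E_{\alpha}(-\omega t^{\alpha}).
$$
The Mittag-Leffler decay estimate \eqref{PSFE1} applied with the argument $\omega t^{\alpha}$ then yields
$$
\|S_{\alpha}(t)\|_{\mathcal{L}(H)}\leq\frac{Mc}{1+\omega t^{\alpha}}\leq\frac{c'}{1+t^{\alpha}},\qquad t>0,
$$
after absorbing $\omega$ into the constant, and the value at $t=0$ is controlled by $S_{\alpha}(0)=I$ and strong continuity.

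The main obstacle is the first step: one must upgrade the mere closed half-plane resolvent bound \eqref{PSFE6} to true exponential decay of the semigroup, because if $T(s)$ were only bounded then the integral against $\Phi_{\alpha}$ would only give $E_{\alpha}(0)=1$ and no decay at all. On a Hilbert space this upgrade is precisely the content of Gearhart--Pr\"uss--Huang, which makes \eqref{PSFE6} the correct minimal hypothesis. Once that is in hand, the subsequent subordination and Mittag-Leffler steps are a direct use of the tools already collected above, so no further delicate argument is needed.
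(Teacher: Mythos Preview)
Your proposal is correct and follows essentially the same route as the paper: invoke Gearhart--Pr\"uss--Huang to convert \eqref{PSFE6} into exponential decay of the semigroup, apply the subordination formula of Proposition~\ref{PSFE4} with $\beta=1$, recognize the resulting integral as $E_{\alpha}(-\omega t^{\alpha})$ via \eqref{PSFE2}, and conclude with \eqref{PSFE1}. The only cosmetic difference is the order of presentation (the paper writes the subordination formula first and then appeals to exponential stability), and your explicit remark that exponential decay of $T$ is \emph{necessary} for the argument to give any decay at all is a nice clarification.
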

\begin{proof}
With the assumption made on the theorem we can apply Proposition \ref{PSFE4} with $\beta=1$ and of course $\gamma=\alpha$ then \eqref{IFE1} admits a solution operator $S_{\alpha}(t)$ given by the following formula,
\begin{equation}\label{PSFE5}
S_{\alpha}(t)=\int_{0}^{+\infty}\varphi_{t,\alpha}(s)S(s)\,\ud s,\quad\forall\,t>0,
\end{equation}
where we denoted  $S_{1}(t)$ simply by $S(t)$ which is the $C_{0}$-semigroup generated by $\mathcal{A}$. Thanks to the assumptions \eqref{PSFE6}, then according to \cite{huang,pruss} the uniform stabilization holds, that is there exist $\omega_{0}>0$ and $K>0$ such that
\begin{equation}\label{PSFE8}
\|S(t)\|_{\mathcal{L}(H)}\leq K\e^{-\omega_{0}t}\quad\forall\,t>0.
\end{equation}
Performing a change of variable on \eqref{PSFE5} and using \eqref{PSFE3} and \eqref{PSFE8} we obtain
\begin{equation}\label{PSFE9}
\|S_{\alpha}(t)\|_{\mathcal{L}(H)}=\left\|\int_{0}^{+\infty}\Phi_{\alpha}(s)S(st^{\alpha})\,\ud s\right\|_{\mathcal{L}(H)}\leq\int_{0}^{+\infty}\Phi_{\alpha}(s)\e^{-\omega_{0}st^{\alpha}}\,\ud s\quad\forall\,t>0.
\end{equation}
Following to \eqref{PSFE2} and \eqref{PSFE9} we find 
$$
\|S_{\alpha}(t)\|_{\mathcal{L}(H)}\leq E_{\alpha}(-\omega_{0}t^{\alpha})\quad\forall\,t>0.
$$
Estimate \eqref{PSFE7} follows now from \eqref{PSFE1} and this completes the proof.
\end{proof}

 \begin{exams}
As examples we consider here the same systems as above but with $\eta =0$:

\begin{equation}\label{EXA1p}
\left\{
\begin{array}{ll}
\ds\partial^{\alpha,\eta}_{t} u(t,x)+ (i \Delta + a(x))u(t,x) =0,\; t > 0, x \in \Omega,\\
u = 0, \, (0,+\infty) \times \partial \Omega,
\\
u(0,x)=u^{0}(x), \, x\in \Omega,
\end{array}
\right.
\end{equation}

and 

\begin{equation}\label{EXA2p}
\left\{
\begin{array}{ll}
\ds\partial^{\alpha,\eta}_{t} u(t,x)+ (i \Delta + i) u(t,x) =0,\; t > 0, x \in \Omega, \\
\partial_\nu u  = i b(x) \, u, \, (0,+\infty) \times \partial \Omega,
\\
u(0,x)=u^{0}(x), \, x \in \Omega,
\end{array}
\right.
\end{equation}
where $\Omega$ is a smooth bounded open domain of $\R^n, $ 
$\partial_\nu = \nu . \nabla$ is the derivative along $\nu$, the unit normal vector pointing outward of $\Omega$
and $a \in L^\infty (\Omega), b \in L^\infty(\partial \Omega)$ are non-identically zero and non-negative functions. 

\medskip

By a direct implication of Theorem \ref{PSFE10} we obtain polynomial stability results for \rfb{EXA1p} and \rfb{EXA2p} under geometric conditions G.C.C. (see, respectively, \cite{leb} and \cite{blr}, for example) on the supports of $a$ and $b$.

\end{exams}
%%%%%%%%%%%%%%%%%%%%%%%%%%%%%%%%%%%%%%%%%%%%%%%%%%%%%%%%%%%%%%%%%%%%%%%%%%%%%%%%%%%%%%%%%%%%%%%%%%%%%%%%%%%%%%%%%%%%%%%%%%%%%%%%%%%%%%%%%%%%%%%%%%%%%%%%%%%%%%%%%%%%%%%%%%%%%%%%%%%%%%%%%%%%%%%%%%%%%%%%%%%%%%%%%%%%%%%%%%%%%%%%%%%%%%%%%%%%%%%%%%%%%%%%%%SECTION%%%%%%%%%%%%%%%%%%%%%%%%%%%%%%%%%%%%%%%%%%%%%%%%%%%%%%%%%%%%%%%%%%%%%%%%%%%%%%%%%%%%%%%%%%%%%%%%%%%%%%%%%%%%%%%%%%%%%%%%%%%%%%%%%%%%%%%%%%%%%%%%%%%%%%%%%%%%%%%%%%%%%%%%%%%%%%%%%%%%%%%%%%%%%%%%%%%%%%%%%%%%%%%%%%%%%%%%%%%%%%%%%%%%%%%%%%%%%%%%%%%%%%%%%%%%
\section{Extension to some integro-differential equation}\label{WEFE}
Let $X$ be a Hilbert space equipped with the norm $\|\,.\,\|_{X}$, and let $A:\mathcal{D}(A)\subset X \rightarrow X$ be a closed, self-adjoint and strictly positive operator on $X$ with dense domain. We introduce the scale of Hilbert spaces $X_{\beta}$, $\beta\in\R$, as follows: for every $\beta \geq 0$, $X_{\beta}={\mathcal D}(A^{\beta})$, with the norm $\|z\|_{\beta}=\|A^{\beta} z\|_{X}$. The space $X_{-\beta}$ is defined by duality with respect to the pivot space $H$ as follows: $X_{-\beta} =X_{\beta}^*$ for $\beta>0$. The operator $A$ can be extended (or restricted) to each $X_\beta$, such that it becomes a bounded operator
$$
A:X_\beta\rarrow X_{\beta-1},\quad \forall\,\beta\in\R \m.
$$
Let a bounded linear operator $B:U\rarrow X_{-\frac{1}{2}}$, where $U$ is another Hilbert space which will be identified with its dual.

We consider the following integro-differential equation
\begin{equation}\label{WEFE1}
\left\{\begin{array}{ll}
\mathrm{\textbf{D}}_{t}^{\alpha}u(t)+g_{\alpha}*Au(t)+BB^{*}u(t)=0,&t>0,
\\
u(0)=u^{0}.&
\end{array}\right.
\end{equation}

We set the Hilbert space $\mathcal{H}=X_{\frac{1}{2}}\times X$ and we consider the unbounded operator $\mathcal{A}:\mathcal{D}{(\mathcal{A})}\longrightarrow\mathcal{H}$ defined by
$$
\mathcal{A}=\left(\begin{array}{cc}
0&I
\\
-A&-BB^{*}
\end{array}\right),
$$
where $\mathcal{D}(\mathcal{A})=\{(v,u)\in\mathcal{H}:\; u\in X_{\frac{1}{2}},\; Av+BB^{*}u\in X\}$. It is well known (see \cite{AHR,AN}) that $\mathcal{A}$ is a generator of a $C_{0}$-semigroup of contractions on $\mathcal{H}$.
\begin{defi}
A function $u\in\mathcal{C}(\R_{+},X)$ such that $g_{\alpha}*u\in\mathcal{C}(\R_{+},X_{\frac{1}{2}})$ is called a strong solution of \eqref{WEFE1} if the couple $\left(\begin{array}{c}g_{\alpha}*u
\\
u
\end{array}\right)\in\mathcal{C}(\R_{+},\mathcal{D}(\mathcal{A}))$, $\left(\begin{array}{c}
g_{1}*u
\\
g_{1-\alpha}*(u-u^{0})
\end{array}\right)\in\mathcal{C}^{1}(\R_{+},\mathcal{H})$ and \eqref{WEFE1} holds on $\R_{+}$ with $u^0 \in X$.
\end{defi}
\begin{defi}\label{WEFE9}
The problem \eqref{WEFE1} is called well-posed if for any $u^{0}\in X_{\frac{1}{2}}$ such that $BB^{*}u^{0}\in X_{\frac{1}{2}}$, there is a unique strong solution $u(t,u^{0})$ of \eqref{WEFE1}, and when $u_{n}^{0}\in\mathcal{D}(\mathcal{A})$ such that $u_{n}^{0}\,\longrightarrow\,0$ as $n\nearrow+\infty$ then $u(t,u_{n}^{0})\,\longrightarrow\,0$ in $X$ and $g_{\alpha}*u(.,u_{n}^{0})(t)\,\longrightarrow\,0$ in $X_{\frac{1}{2}}$ as $n\nearrow+\infty$ in $H$, uniformly on compact intervals.
\end{defi}
\begin{thm} \label{stabint}
Under the above assumptions made on the operator $A$, system \rfb{WEFE1} is well-posed in such away if $u^{0}\in X_{\frac{1}{2}}$ such that $BB^{*}u^{0}\in X$ and we have the following regularity of the solution
$$
u\in\mathcal{C}(\R_{+},X_{\frac{1}{2}}),\quad g_{\alpha}*u\in\mathcal{C}(\R_{+},X_{\frac{1}{2}}),\quad g_{1-\alpha}*(u-u^{0})\in\mathcal{C}^{1}(\R_{+},X)
$$
If in addition, the following properties hold,
\begin{equation}\label{WEFE6}
i\R\subset\rho(\mathcal{A}),\quad\text{and}\quad\limsup_{\mu\in\R, |\mu| \rightarrow + \infty}\|(i\mu I-\mathcal{A})^{-1}\|_{\mathcal{L}(H)}<+\infty.
\end{equation}
Then for some constant $C>0$ and for any data $u^{0}\in X_\half$, the solution $u(t)$ of \eqref{WEFE1} satisfies the following asymptotic estimates
\begin{equation}\label{WEFE7}
\|u(t)\|_{X}\leq\frac{C}{1+t^{\alpha}}\|u^{0}\|_{X},\quad\forall\,t\geq0,
\end{equation}
and
\begin{equation}\label{WEFE8}
\|g_{\alpha} * u (t)\|_{X_{\frac{1}{2}}}\leq\frac{C}{1+t^{\alpha}}\|u^{0}\|_{X},\quad\forall\,t\geq0.
\end{equation}
\end{thm}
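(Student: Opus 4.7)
The natural strategy is to recast \eqref{WEFE1} as an abstract fractional Cauchy problem of the form \eqref{WPFE6} on the product space $\mathcal{H}=X_{\frac12}\times X$, and then invoke Theorem \ref{PSFE10} for the polynomial decay. Concretely, I would set $v(t)=g_\alpha * u(t)$ and introduce the augmented state $U(t)=\bigl(v(t),u(t)\bigr)^{T}$. Using $v(0)=0$ and the semigroup property \eqref{WPFE12}, one computes
\begin{equation*}
\mathrm{\textbf{D}}_t^\alpha v=D_t^\alpha v=(g_{1-\alpha}*g_\alpha * u)'=(g_1 * u)'=u,
\end{equation*}
while from \eqref{WEFE1} directly
\begin{equation*}
\mathrm{\textbf{D}}_t^\alpha u=-g_\alpha*Au-BB^*u=-Av-BB^*u.
\end{equation*}
Hence $U$ satisfies $\mathrm{\textbf{D}}_t^\alpha U=\mathcal{A}U$ with $U(0)=(0,u^0)^T$. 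When $u^0\in X_{\frac12}$ with $BB^*u^0\in X$, the pair $(0,u^0)$ lies in $\mathcal{D}(\mathcal{A})$ since $A\cdot 0+BB^*u^0=BB^*u^0\in X$.

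Next, for the well-posedness step I would invoke the fact recalled just before the theorem, namely that $\mathcal{A}$ generates a $C_0$-semigroup of contractions on $\mathcal{H}$. In particular, $\mathcal{A}$ is m-dissipative and Theorem \ref{WPEFE17} (applied with $\eta=0$) yields a solution operator $S_{\alpha}(t)$ for the abstract problem on $\mathcal{H}$. Setting $U(t)=S_\alpha(t)(0,u^0)^T$ and reading off its two components then produces $u\in\mathcal{C}(\mathbb{R}_+,X)$ and $g_\alpha*u\in\mathcal{C}(\mathbb{R}_+,X_{\frac12})$; the sharper regularity $u\in\mathcal{C}(\mathbb{R}_+,X_{\frac12})$ follows because $(v,u)\in\mathcal{C}(\mathbb{R}_+,\mathcal{D}(\mathcal{A}))$, and $g_{1-\alpha}*(u-u^0)\in\mathcal{C}^1(\mathbb{R}_+,X)$ follows from the $\mathrm{\textbf{D}}_t^\alpha u$--equation rewritten via the right inverse identity \eqref{WPFE8}. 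The continuous dependence required in Definition \ref{WEFE9} is immediate from strong continuity of $S_\alpha(t)$.

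For the decay estimates I would use the hypothesis \eqref{WEFE6} together with the contractivity of $e^{t\mathcal{A}}$. Contractivity places $\sigma(\mathcal{A})\subset\{\mathrm{Re}\,\lambda\le 0\}$, while \eqref{WEFE6} excludes the imaginary axis and bounds the resolvent on it uniformly. By the Huang--Pr\"uss theorem (the references \cite{huang,pruss} invoked right after \eqref{PSFE6}), these two facts together imply uniform exponential stability: $\|e^{t\mathcal{A}}\|_{\mathcal{L}(\mathcal{H})}\le K e^{-\omega_0 t}$ for some $\omega_0,K>0$. Condition \eqref{PSFE6} therefore holds for $\mathcal{A}$ on $\mathcal{H}$, so Theorem \ref{PSFE10} applies and delivers
\begin{equation*}
\|S_\alpha(t)\|_{\mathcal{L}(\mathcal{H})}\le\frac{c}{1+t^\alpha},\quad\forall\,t\ge 0.
\end{equation*}
Applying this to $U(t)=S_\alpha(t)(0,u^0)^T$ and using $\|(0,u^0)\|_{\mathcal{H}}=\|u^0\|_X$, the two components give exactly \eqref{WEFE7} and \eqref{WEFE8}.

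The main technical obstacle will be the first paragraph: carefully justifying that the reduction to $\mathrm{\textbf{D}}_t^\alpha U=\mathcal{A}U$ is an equivalence, not just an implication, so that the strong solution of the abstract system really corresponds to a strong solution of \eqref{WEFE1} in the sense of the preceding definition (matching the required regularity of $g_1*u$ and $g_{1-\alpha}*(u-u^0)$). The identity $\mathrm{\textbf{D}}_t^\alpha(g_\alpha*u)=u$ and the various convolution regularities need to be tracked with the tools \eqref{WPFE12}--\eqref{WPFE8}; once this bookkeeping is done, the decay estimates are an immediate consequence of Theorem \ref{PSFE10} applied on $\mathcal{H}$.
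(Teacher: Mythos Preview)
Your proposal is correct and follows essentially the same route as the paper: lift \eqref{WEFE1} to the abstract system $\mathrm{\textbf{D}}_t^\alpha U=\mathcal{A}U$ on $\mathcal{H}$ with $U^0=(0,u^0)^T$, use the m-dissipativity of $\mathcal{A}$ and Theorem~\ref{WPEFE17} for well-posedness, check the equivalence via the integral formulation and the identities \eqref{WPFE12}, \eqref{WPFE7}, \eqref{WPFE8}, and then apply Theorem~\ref{PSFE10} to obtain the $\frac{C}{1+t^\alpha}$ bound for $U(t)$, from which \eqref{WEFE7}--\eqref{WEFE8} follow componentwise. Your explicit remark that \eqref{WEFE6} combined with contractivity yields the hypothesis \eqref{PSFE6} of Theorem~\ref{PSFE10} via Huang--Pr\"uss is in fact a point the paper passes over silently.
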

\begin{rem}
We note that in the case where $B \in \mathcal{L}(U,X)$ and if $u^0 \in X_\half$ we have immediately that $BB^* u^0 \in X$. 
\end{rem}
\begin{proof}
Let's consider the following equation
\begin{equation}\label{WEFE2}
\left\{\begin{array}{ll}
\mathrm{\textbf{D}}_{t}^{\alpha}U(t)=\mathcal{A}U(t),&t\geq 0,
\\
U(0)=U^{0},&
\end{array}\right.
\end{equation}
where we have denoted by
$$
U(t)=\left(\begin{array}{c}
v(t)
\\
u(t)
\end{array}\right)\qquad\text{and}\qquad U^{0}=\left(\begin{array}{c}
v^{0}
\\
u^{0}
\end{array}\right).
$$
Since the operator $\mathcal{A}$ is m-dissipative (see \cite{AHR}) then according to Theorem \ref{WPEFE17}, system \eqref{WEFE2} is well-posed as given by Definition \ref{WPFE17}. In the other hand, according to section \ref{WPFE} system \eqref{WEFE2} is equivalent to the following integral equation
\begin{equation}\label{WEFE5}
U(t)=U^{0}+g_{\alpha}*\mathcal{A}U(t).
\end{equation}
Equation \eqref{WEFE5} can be also writing as follow:
\begin{equation*}
\left(\begin{array}{c}
v(t)
\\
u(t)
\end{array}\right)=\left(\begin{array}{c}
v^{0}
\\
u^{0}
\end{array}\right)+g_{\alpha}* \left[\left(\begin{array}{cc}
0&I
\\
-A&-BB^{*}
\end{array}\right)\left(\begin{array}{c}
v(t)
\\
u(t)
\end{array}\right) \right].
\end{equation*}
Equivalently, we have
\begin{equation}\label{WEFE3}
u(t)= u^{0}-g_{2\alpha}*Au(t)-g_{\alpha}*Av^{0}-g_{\alpha}*BB^{*}u(t),
\end{equation}
where we have used the semigroup property \eqref{WPFE12}. By taking $v^{0}=0$ and $u^{0}\in X_{\frac{1}{2}}$ such that $BB^{*}u^{0}\in X$ in \eqref{WEFE3}, we obtain
\begin{equation}\label{WEFE4}
u(t)= u^{0}-g_{2\alpha}*Au(t)-g_{\alpha}*BB^{*}u(t).
\end{equation}
Since in this case $U(t)$ is given by the couple $\left(\begin{array}{c}
g_{\alpha}*u(t)
\\
u(t)
\end{array}\right)$ where $u(t)$ is the solution of the system \eqref{WEFE4} and the problem \eqref{WEFE2} is well-posed then by Definition \ref{WPFE17}, $U\in\mathcal{C}(\R,\mathcal{D}(\mathcal{A}))$ and $g_{1-\alpha}*U\in\mathcal{C}(\R,\mathcal{D}(\mathcal{A}))$ then this imply that $u\in\mathcal{C}(\R_{+},X_{\frac{1}{2}})$, $g_{\alpha}*u\in\mathcal{C}(\R_{+},X_{\frac{1}{2}})$ and $g_{1-\alpha}*(u-u^{0})\in\mathcal{C}^{1}(\R_{+},X)$ then by applying the operator $\mathrm{\textbf{D}}_{t}^{\alpha}$ on both sides of\eqref{WEFE4} we find that \eqref{WEFE4} is equivalent to
$$
\mathrm{\textbf{D}}_{t}^{\alpha}u(t)+g_{\alpha}*Au(t)+BB^{*}u(t)=0,\qquad\forall\,t>0,
$$
where we have used here the semigroup property \eqref{WPFE12} and \eqref{WPFE7}. Now we have proved that system \eqref{WEFE2} with $U^{0}=\left(\begin{array}{c}
0
\\
u^{0}
\end{array}\right)$ with $u^{0}\in X_{\frac{1}{2}}$ such that $BB^{*}u^{0}\in X$ is equivalent to the  equation \eqref{WEFE4}. Since \eqref{WEFE2} is well-posed in the sens of Definition \ref{WPFE17} then problem \eqref{WEFE1} is also well-posed in the sens of Definition \ref{WEFE9} and the regularities of the solution $u(t)$ of \eqref{WEFE1} given by the theorem hold.

Now if assumptions \eqref{WEFE6} hold, then according to Theorem \ref{PSFE10} the solution $U$ of \eqref{WEFE2} satisfies the following estimation
$$
\|U(t)\|_{\mathcal{H}}\leq\frac{C}{1+t^{\alpha}}\|U^{0}\|_{\mathcal{H}},\quad\forall\,U^{0}\in\mathcal{H},\;\forall\,t\geq0,
$$
for some constant $C>0$, then we follow
$$
\|u(t)\|_{X}+\|g_{\alpha}*u(t)\|_{X_{\frac{1}{2}}}\leq\frac{C}{1+t^{\alpha}}\|u^{0}\|_{X},\quad\forall\,u^{0}\in X_\half,\;\forall\,t\geq0.
$$
This implies in particular the estimates \eqref{WEFE7} and \eqref{WEFE8} and completes the proof.
\end{proof}

\begin{exam}
We consider the following integro-differential equation:
\begin{equation}\label{examin}
\left\{\begin{array}{ll}
\mathrm{\textbf{D}}_{t}^{\alpha}u(t,x) - g_{\alpha}*\Delta u(t,x)+ a(x) u(t,x)=0,&t>0, x\in \Omega, \\
u = 0, \, (0,+\infty) \times\partial\Omega,
\\
u(0,\cdot)=u^{0} \in H^1_0 (\Omega),&
\end{array}\right.
\end{equation}
where $\Omega$ is a smooth bounded open domain of $\R^n$ and $a \in L^\infty (\Omega)$ is non-identically zero and positive function. 

\medskip

By a direct implication of Theorem \ref{stabint} we obtain a polynomial stability result for \rfb{examin} under a geometric condition G.C.C (see \cite{leb} for more details) on the support of $a$.
\end{exam}
%%%%%%%%%%%%%%%%%%%%%%%%%%%%%%%%%%%%%%%%%%%%%%%%%%%%%%%%%%%%%%%%%%%%%%%%%%%%%%%%%%%%%%%%%%%%%%%%%%%%%%%%%%%%%%%%%%%%%%%%%%%%%%%%%%%%%%%%%%%%%%%%%%%%%%%%%%%%%%%%%%%%%%%%%%%%%%%%%%%%%%%%%%%%%%%%%%%%%%%%%%%%%%%%%%%%%%%%%%%%%%%%%%%%%%%%%%%%%%%%%%%%%%%%%%SECTION%%%%%%%%%%%%%%%%%%%%%%%%%%%%%%%%%%%%%%%%%%%%%%%%%%%%%%%%%%%%%%%%%%%%%%%%%%%%%%%%%%%%%%%%%%%%%%%%%%%%%%%%%%%%%%%%%%%%%%%%%%%%%%%%%%%%%%%%%%%%%%%%%%%%%%%%%%%%%%%%%%%%%%%%%%%%%%%%%%%%%%%%%%%%%%%%%%%%%%%%%%%%%%%%%%%%%%%%%%%%%%%%%%%%%%%%%%%%%%%%%%%%%%%%%%%%
%\section{Application to the Schr\"odinger  equation}
%%%%%%%%%%%%%%%%%%%%%%%%%%%%%%%%%%%%%%%%%%%%%%%%%%%%%%Bibliography%%%%%%%%%%%%%%%%%%%%%%%%%%%%%%%%%%%%%%%%%%%%%%%%%%%%%%%%


\begin{thebibliography}{99}
%\bibitem{AAB} {\sc Z.~Achouri, N.-E.~Amroun, and A.~Benaissa,} The Euler-Bernoulli beam equation with boundary dissipation of fractional derivative type. {\em Mathematical Methods in the Applied Sciences}, (2016).

\bibitem{AHR} {\sc K.~Ammari, F.~Hassine and L.~Robbiano}, Fractional-feedback stabilization for a class of evolution systems, arXiv:1805.10148.

%\bibitem{AT} {\sc K.~Ammari and M.~Tucsnak}, Stabilization of second order evolution equations by a class of unbounded feedbacks, {\em ESAIM Control Optim. Calc. Var.,} {\bf 6} (2001), 361--386.

\bibitem{AN} {\sc K.~Ammari and S.~Nicaise,} {\em Stabilization of elastic systems by collocated feedback,} 2124, Springer, Cham, 2015.

\bibitem{bajlekova} {\sc E.~Bajlekova,} {\em Fractional evolution equations in Banach spaces,} PhD Thesis, Eindhoven University of Technology, 2001.

\bibitem{blr} {\sc C.~Bardos, G.~Lebeau and J.~Rauch,} Sharp sufficient conditions for the observation, control, and stabilization of waves from the boundary, {\em SIAM J. Control Optim.,} {\bf 30} (1992), 1024--1065.

\bibitem{BH} {\sc D.~Ben-Avraham and S.~Havlin,} {\em Diffusion and reactions in fractals and disordered systems,} Cambridge University Press, Cambridge, 2000.

\bibitem{BG} {\sc J.P.~Bouchaud and A.~Georges,} Anomalous diffusion in disordered media: statistical mechanisms, models and physical applications, {\em Phys Rep}, {\bf 195} (1990), no. 4--5, 127--293.

\bibitem{CCV} {\sc L.~Caffarelli, C.~Chan and A.~Vasseur,} Regularity theory for parabolic nonlinear integral operators, {\em J Am Math Soc,} {\bf 24} (2011), no. 3, 849--869.

\bibitem{CV} {\sc L.~Caffarelli and A.~Vasseur,} Drift diffusion equations with fractional diffusion and the quasigeostrophic equation, {\em Ann Math,} 171 (2010), no. 3, 1903--1930.

\bibitem{CSS} {\sc L.~Caffarelli, S.~Salsa and L.~ Silvestre,} Regularity estimates for the solution and the free boundary of the obstacle problem for the fractional Laplacian, {\em Invent Math,} {\bf 171} (2008), no. 2, 425--461.

\bibitem{CS} {\sc L.~Caffarelli and L.~Silvestre,} An extension problem related to the fractional Laplacian. {\em Commun Partial Differ Equ,} {\bf 32}  (2007), no. 7--9, 1245--1260.

\bibitem{CKRZ} {\sc P.~Constantin, A.~Kiselev, L.~Ryzhik and A.~Zlato\v s,} Diffusion and mixing in fluid flow, {\em Ann Math,} {\bf 168} (2008), no. 2, 643--674.

\bibitem{C} {\sc E.~Cuesta,} Asymptotic behaviour of the solutions of fractional integro-differential equations and some time discretizations, {\em Discrete and Continuous Dynamical Systems,} (2007), 277--285.

 

%\bibitem{borichevtomilov}{\sc A.~Borichev and Y.~Tomilov,} Optimal polynomial decay of function and operator semigroups, {\em Math. Ann.,} {\bf 347}(2) (2010), 455--478.

%\bibitem{brezis} {\sc H.~Brezis,} {\em Functional analysis, Sobolev spaces and partial differential equations.} Universitext. Springer, New York, 2011.

%\bibitem{CF} {\sc M.~Caputo and M.~Fabrizio,} A new definition of fractional derivative without singular kernel, {\em Progr. Fract. Differ. Appl.} {\bf 1} (2015), 73--85.

%\bibitem{das} {\sc S.~Das,} {\em Functional fractional calculus for system identification and control}, Springer Science \& Business Media, 2011.

%\bibitem{GB} {\sc J.F.~G\'omez-Aguilar and D.~Baleanu,} Solutions of the telegraph equations using a fractional calculus approach, {\em Proc. Romanian Acad. Ser. A,} {\bf 15} (2014), 27--34.

\bibitem{EMOT} {\sc A.~Erd\'elyi, W.~Magnus, F.~Oberhettinger and F.G.~Tricomi,} {\em Higher Transcendental Functions}, Volume III, McGraw-Hill, New York, 1955.

\bibitem{GLM} {\sc R.~Gorenflo, Y.~Luchko, and F.~Mainardi,} Analytical properties and applications of the Wright function, {\em Fractional Calculus \& Applied Analysis}, {\bf 2} (1999), no 4, 383--414.

\bibitem{huang} {\sc F.~Huang,} Characteristic conditions for exponential stability of linear dynamical systems in Hilbert space, {\em Ann. Differential Equations.,} {\bf 1} (1985), 43--56.

\bibitem{KNV} {\sc A.~Kiselev, F.~Nazarov and A.~Volberg,} Global well-posedness for the critical 2D dissipative quasigeostrophic equation, {\em Invent Math,} {\bf 167} (2007), 445--453.

%\bibitem{MJBSR} {\sc J.A.~Machado, I.S.~Jesus, R.~Barbosa, M.~Silva, and C.~Rei,} Application of fractional calculus in engineering, {\em Dynamics, games and science I,} {\bf 1} (2011), 619--629.

%\bibitem{ML} {\sc J.A.T.~Machado and A.M.~Lopes,} Analysis of natural and artificial phenomena using signal processing and fractional calculus, {\em Fract. Calc. Appl. Anal.} {\bf 18} (2015), 459--478.

%\bibitem{magin} {\sc R.L.~Magin,} {\em Fractional calculus in bioengineering}, Begell House: Redding, CT, USA, 2006.

%\bibitem{mbodje} {\sc B.~Mbodje,} Wave energy decay under fractional derivative controls, {\em IMA Journal of Mathematical Control and Information,} {\bf 23}, (2006), 237--257.

%\bibitem{mbodjemontsey} {\sc B.~Mbodje and G.~Montseny,} Boundary fractional derivative control of the wave equation, {\em IEEE Transactions on Automatic Control,} {\bf 40} (1995), 368--382.

%\bibitem{Pazy} {\sc A.~Pazy,} {\em Semigroups of linear operators and applications to partial differential equations}, Springer, New York, 1983.

\bibitem{leb} {\sc G.~Lebeau,} \'Equation des ondes amorties, {\em Algebraic and geometric methods in mathematical physics} (Kaciveli, 1993), 73--109, Math. Phys. Stud., {\bf 19}, Kluwer Acad. Publ., Dordrecht, 1996.

\bibitem{Mainardi} {\sc F.~Mainardi,} Fractional relaxation-oscillation and fractional diffusion-wave phenomena, {\em Chaos, Solitons \& Fractals,} {\bf 7} (1996), no. 9, 1461--1477.

\bibitem{MK} {\sc R.~Metzler and J.~Klafter,} The restaurant at the end of the random walk: recent developments in the description of anomalous transport by fractional dynamics, {\em J Phys A,} {\bf 37} (2004), no. 31, 161--208.

\bibitem{podlubny} {\sc I.~Podlubny,} {\em Fractional Differential Equations}, Academic Press, San Diego, 1999.

\bibitem{pruss} {\sc J.~Pr\"uss,} On the spectrum of $C_0$-semigroups, {\em Trans. Amer. Math. Soc.,} {\bf 284} (1984), 847--857.

\bibitem{pruss2} {\sc J.~Pr\"uss,} {\em Evolutionary Integral Equations and Applications}, Birkh\"auser, Basel, Boston, Berlin, 1993.

\bibitem{QZ} {\sc L.~Qi and E.~Zuazua,} On the lack of controllability of fractional in time ODE and PDE, {\em Mathematics of Control, Signals, and Systems,} {\bf 28} (2016), no.2, 10.

%\bibitem{stahn} {\sc R. Stahn,} Optimal decay rate for the wave equation on a square with constant damping on a strip, {\em Z. Angew. Math. Phys,} {\bf 68} (2017), no. 2, Art. 36, 10 pp.

%\bibitem{tarasov} {\sc V.E.~Tarasov,} {\em Fractional Dynamics: Applications of Fractional Calculus to Dynamics of Particles, Fields and Media}, Springer: Berlin/Heidelberg, Germany, 2011.

%\bibitem{tibou1} {\sc L.~Tibou,} A constructive method for the stabilization of the wave equation with localized Kelvin-Voigt damping, {\em C. R. Acad. Sci. Paris, Ser. I,}  {\bf 350} (2012), 603--608.

%\bibitem{tibou2} {\sc L.~Tibou,} Stabilisation of some elastic systems with localized Kelvin-Voigt damping, {\em Discrete and continuous dynamical systems,}  {\bf 36} (2016), 7117--7136.

\bibitem{SKM} {\sc S.G~Samko, A.A.~Kilbas and O.I.~~Marichev,} {\em Fractional integrals and derivatives, Theory and applications,} Gordon and Breach Science Publishers, Yverdon, 1993.

\bibitem{tucsnakweinss} {\sc M.~Tucsnak and G.~Weiss,} {\em Observation and control for operator semigroups,} Birkh\"auser Verlag AG, 2009.

\bibitem{wright} {\sc E.M.~Wright,} The generalized Bessel function of order greater than one, Quarterly Journal of Mathematics (Oxford ser.), {\bf 11} (1940), 36--48.

%\bibitem{VMK} {\sc D.~Val\'erio, J.A.T.~Machado and V.~Kiryakova,} Some pioneers of the applications of fractional calculus. {\em Fract. Calc. Appl. Anal.,} {\bf 17} (2014), 552--578.

%\bibitem{walter} {\sc M.~Walter,} {\em Dynamical Systems and Evolution Equations, Theory and Applications.,} New York: Plenum Press., 1980.

\end{thebibliography}
\end{document}